\newcommand{\V}{\ensuremath{\mathrm{V}}}
\newcommand{\ZZ}{\ensuremath{\mathbb{Z}}}
\newcommand{\QQ}{\ensuremath{\mathbb{Q}}}
\newcommand{\Ker}{\ensuremath{\operatorname{Ker}}}
\newcommand{\sgn}{\ensuremath{\operatorname{sgn}}}
\newcommand{\GL}{\ensuremath{\operatorname{GL}}}
\newcommand{\PSL}{\ensuremath{\operatorname{PSL}}}
\newcommand{\Inn}{\ensuremath{\operatorname{Inn}}}
\newcommand{\Aut}{\ensuremath{\operatorname{Aut}}}
\newlength{\heightofhw}
\newcommand{\eigbox}[2]{\ensuremath{#1 \times \boxed{\rule{0cm}{\heightofhw} #2}}}
\newcommand{\cc}[1]{\ensuremath{{\texttt{#1}}}}
\definecolor{LinkColor}{rgb}{0,0,0} %black
\newtheorem{theorem}{Theorem}[section]
\crefname{theorem}{Theorem}{Theorems}
\newaliascnt{corollary}{theorem}
\newtheorem{corollary}[corollary]{Corollary}
\crefname{corollary}{Corollary}{Corollaries}
\newaliascnt{lemma}{theorem}
\newtheorem{lemma}[lemma]{Lemma}
\crefname{lemma}{Lemma}{Lemmas}
\newaliascnt{proposition}{theorem}
\newtheorem{proposition}[proposition]{Proposition}
\crefname{proposition}{Proposition}{Propositions}
\newtheorem*{zc*}{Zassenhaus Conjecture (ZC)}
\newtheorem*{pq*}{Prime Graph Question (PQ)}
\newtheorem*{theorem*}{Theorem}
\theoremstyle{definition}
\newaliascnt{definition}{theorem}
\newtheorem{definition}[definition]{Definition}
\crefname{definition}{Definition}{Definitions}
\theoremstyle{remark}
\newaliascnt{nexample}{theorem}
\newtheorem{nexample}[nexample]{Example}
\crefname{nexample}{Example}{Examples}
\newaliascnt{remark}{theorem}
\newtheorem{remark}[remark]{Remark}
\crefname{remark}{Remark}{Remarks}
\newaliascnt{notation}{theorem}
\crefname{notation}{Notation}{Notations}
\title{On the Prime Graph Question for Almost Simple Groups with an Alternating Socle}
\author{Andreas B\"achle}
\address{Department of Mathematics, Vrije Universiteit Brussel,
Pleinlaan 2, 1050 Brussels, Belgium}
\email{\href{mailto:abachle@vub.ac.be}{abachle@vub.ac.be}}
\author{Mauricio Caicedo}
\email{\href{mailto:mcaicedo@vub.ac.be}{mcaicedo@vub.ac.be}}
\subjclass[2010]{Primary 16U60, 16S34; Secondary 20C30, 20C10}
\keywords{group rings, finite groups, Prime Graph Question, symmetric groups, alternating groups}
\thanks{The research is supported by the Research Foundation Flanders (FWO - Vlaanderen), partially by the FWO project G.0157.12N}
\begin{document}

\begin{abstract}
 Let $G$ be an almost simple group with socle $A_n$, the alternating group of degree $n$. We prove that there is a unit of order $pq$ in the integral group ring of $G$ if and only if there is an element of that order in $G$ provided $p$ and $q$ are primes greater than $\frac{n}{3}$. We combine this with some explicit computations to verify the Prime Graph Question for all almost simple groups with socle $A_n$ if $n \leq 17$.
\end{abstract}

\maketitle

\section{Introduction}
 Let $\V(\ZZ G)$ denote the group of normalized units (i.e.\ units of augmentation one) of the integral group ring $\ZZ G$ of a finite group $G$. One of the most outstanding questions in integral group rings is the so-called (first) Zassenhaus Conjecture.
 
 \begin{zc*}[\cite{Zassenhaus}] Let $G$ be a finite group and $u$ a torsion unit in $\V(\ZZ G)$. Then there exists a unit $x$ in the rational group algebra $\mathbb{Q}G$ such that $x^{-1}ux = g$ for some $g \in G$.
 \end{zc*}
 
 The conjecture is known to hold for several classes of groups, e.g.\ in Weiss' celebrated article \cite{WeissCrelle} the Zassenhaus Conjecture is proved for all nilpotent groups. The second author, Á.~del~Río and L.~Margolis confirmed it for all cyclic-by-abelian groups \cite{CyclicByAbelian}. Also, for some specific non-solvable groups some results are known. For example, (ZC) holds for all $\PSL(2,q)$ with $q \leq 25$, $q \in \{31, 32\}$ \cite[Theorem~C]{BM4pII} or $q$ a Fermat or Mersenne prime \cite[Theorem~1.1]{MdRS}; including $A_5 \simeq \PSL(2,5)$ \cite{LP89} and $A_6 \simeq \PSL(2,9)$ \cite{HertweckA6}. However, in general the conjecture remains open. Because of the difficulty of the problem, W.~Kimmerle proposed to study the Prime Graph Question as a first approximation.
 
 \begin{pq*}[\cite{KimmiPQ}] Let $G$ be a finite group. Let $p$ and $q$ be different primes such that $\mathrm{V}(\mathbb{Z}G)$ contains an element of order $pq$. Does then $G$ possess an element of order $pq$?\end{pq*}
 
 In other words, this question asks whether $G$ and $\mathrm{V}(\mathbb{Z}G)$ have the same prime graph. Here, the prime graph of a group $X$ is the graph having as vertices those primes $p$ such that there is an element of order $p$ in $X$ and two different vertices $p$ and $q$ are connected if and only if there is an element of order $pq$ in $X$.

 There is an affirmative answer to the Prime Graph Question for all solvable groups and all Frobenius groups \cite{KimmiPQ}. To attack (PQ) in general, W. Kimmerle and A. Konovalov \cite[Theorem~2.1]{KiKo16} proved a reduction result to almost simple groups (groups $A$, that are sandwiched between a simple group $S$ as socle and its automorphism group, $S \simeq \Inn (S) \leq A \leq \Aut(S)$). Their result puts a spotlight on the Prime Graph Question for almost simple groups. In a series of papers, Bovdi, Konovalov et.al.\ proved (PQ) for half of the sporadic simple groups; the almost simple groups containing them were checked by Kimmerle and Konovalov (see \cite[Section 5]{KiKoStAndrews} for an overview and references). The first infinite series for which (PQ) was answered are the groups $\PSL(2,p)$, with $p$ a prime, see \cite{HertweckBrauer}. In the meantime  there is also an affirmative answer to (PQ) for almost simple groups with socle $\PSL(2,p^f)$, $f \leq 2$ \cite[Theorem~A]{BM4pI}. %Also for all almost simple groups with at most $3$ primes dividing their order, the Prime Graph Question was answered positively \cite{BaMaM10, KiKo16}.
 The Prime Graph Question also has a positive answer for all groups with an order divisible by at most 3 different primes \cite{BaMaM10, KiKo16}.
 
 In this article we consider the Prime Graph Question for almost simple groups having $A_n$ as socle. The Zassenhaus Conjecture is only known to hold for the alternating groups of degree at most $6$ \cite{HertweckA6} and the symmetric groups of degree at most $5$ \cite{ZasS5}. Up to now, the Prime Graph Question was known to have an affirmative answer for alternating groups up to degree $10$ and all almost simple groups having an alternating socle of degree at most $6$ (cf. \cite{SalimA10, KiKoStAndrews} and the references therein).  
  
 The main result of this article (contained in \cref{big_p_q,cor_A_n}) is the first generic result on the non-existence of certain units in this context.
 
 \begin{theorem*} Let $n \in \ZZ_{\geq 1}$, $p, q$ two primes with $p, q > \frac{n}{3}$ and $G$ be an almost simple group with socle $A_n$. Then $G$ possesses an element of order $pq$ if and only if there is an element of order $pq$ in $\V(\ZZ G)$.  
 \end{theorem*}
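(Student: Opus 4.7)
My plan is to apply the Luthar--Passi--Hertweck (HeLP) method with the natural $n$-dimensional permutation character of $S_n$. The ``only if'' direction is trivial, so I focus on the converse. For $n\neq 6$ the almost simple groups with socle $A_n$ are only $A_n$ and $S_n$, so it suffices to treat these (the case $n=6$ being handled separately). I may assume $p\neq q$, and the hypothesis $p,q>n/3$ forces both to be odd as soon as $n\geq 6$. If $p+q\leq n$, then $(1\,2\,\cdots\,p)(p+1\,\cdots\,p+q)$ is a product of odd-length cycles, hence lies in $A_n\leq G$ with order $pq$, and there is nothing to prove. So I may assume $p+q>n$; in particular $G$ itself has no element of order $pq$, and the task becomes to rule out such a unit in $\V(\ZZ G)$.

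Let $u\in\V(\ZZ G)$ be a putative unit of order $pq$. By Hertweck's theorem on exponents its partial augmentations are supported on classes of orders dividing $pq$, and by Berman--Higman they vanish at the identity; hence they live on the classes of orders $p$ and $q$. Since $p,q>n/3$, the order-$p$ classes of $S_n$ have cycle type $(p,1^{n-p})$ or (if $2p\leq n$) $(p^2,1^{n-2p})$, and these do not split in $A_n$ (the repeated $1$-cycles obstruct splitting) outside the degenerate cases $n\in\{p,p+1\}$, which I would check by hand. The same holds for $q$, giving at most four relevant classes and partial augmentations $\alpha_1,\alpha_2,\beta_1,\beta_2$ of $u$ with $\alpha_1+\alpha_2+\beta_1+\beta_2=1$. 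Moreover $u^p$ and $u^q$ are prime-order units: HeLP applied at prime order pins their character values $\pi(u^p)$ and $\pi(u^q)$ (where $\pi$ is the natural $n$-dimensional permutation character of $S_n$) down to finitely many explicit values.

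I would then exploit that the permutation module $M=\QQ^n$, viewed as a $\QQ\langle u\rangle\cong\QQ C_{pq}$-module, decomposes as
\[
M\;\cong\;a\cdot\QQ\;\oplus\;b\cdot\QQ(\zeta_p)\;\oplus\;c\cdot\QQ(\zeta_q)\;\oplus\;d\cdot\QQ(\zeta_{pq}),
\]
with $a,b,c,d\in\ZZ_{\geq 0}$ satisfying $a+(p-1)b+(q-1)c+(p-1)(q-1)d=n$. Luthar--Passi identifies $a$, $a+(p-1)b$, and $a+(q-1)c$ with explicit averages of $\pi$-values at $1,u,u^p,u^q$, while $\pi(u)=\alpha_1(n-p)+\alpha_2(n-2p)+\beta_1(n-q)+\beta_2(n-2q)$. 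Non-negativity and integrality of $a,b,c,d$, together with the at most four possibilities for the pair of $(u^p,u^q)$-types, reduces the question to a small integer feasibility check. The expectation is that the strict inequality $p+q>n$ — equivalently $n-2p<q-p$ and $n-2q<p-q$ — forces at least one of $a,b,c,d$ to be negative in every case, yielding the desired contradiction.

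The main obstacle I anticipate is the uniform bookkeeping over these finitely many combinations and the integer distributions of $\alpha_i,\beta_i$, together with the edge cases ($G\not\in\{A_n,S_n\}$, which only occurs for $n=6$, and possible class splittings). If the natural permutation character alone is insufficient for a given subcase, I would augment the HeLP constraints with other small characters, such as the $(n-1)$-dimensional standard irreducible or the sign character of $S_n$; but the arithmetic driver throughout is the inequality $p+q>n$.
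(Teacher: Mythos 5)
Your setup coincides with the paper's: reduce to $p+q>n$, observe that the partial augmentations of $u$ can only be supported on the (at most four) classes $\cc{p.1},\cc{p.2},\cc{q.1},\cc{q.2}$, decompose the permutation module over $\QQ\langle u\rangle$ and feed the multiplicities $a,b,c,d$ into Luthar--Passi. (For the passage from $S_n$ to $A_n$ you can sidestep all questions of class splitting by simply viewing $\V(\ZZ A_n)\subseteq\V(\ZZ S_n)$, which is what the paper does.) The decisive first step, $d=0$ and $b=1$, does follow exactly as you indicate, because $u^q$ is rationally conjugate to a single $p$-cycle once $p>n/2$. The gap is in your concluding ``integer feasibility check''. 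When $q>n/2$ as well, the permutation character really does finish the job (one gets $\varepsilon_{\cc{p.1}}(u)=p/(p-q)\notin\ZZ$). But in the range $n/3<q\le n/2$ there are three unknown partial augmentations $\varepsilon_{\cc{q.1}}(u),\varepsilon_{\cc{q.2}}(u),\varepsilon_{\cc{p.1}}(u)$, and the permutation character supplies only \emph{one} linear condition on them beyond $\varepsilon(u)=1$: once $\pi(u^p)$ and $\pi(u^q)$ are fixed, all of $a,b,c,d$ are affine functions of the single quantity $\pi(u)$, so the constraint set reduces to $d=0$, i.e.\ $\mu_1(u,\pi)=0$, and a one-parameter family of integer solutions survives with $a,b,c,d$ all non-negative. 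No contradiction is forced by $p+q>n$ at this point.

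Your proposed rescue characters do not help: the $(n-1)$-dimensional standard irreducible \emph{is} the nontrivial constituent of the permutation character you are already using, and the sign character equals $1$ on every class of elements of order $p$, $q$ or $pq$ (all cycles involved have odd length), so it imposes nothing. The paper closes this case by bringing in $\chi_{(n-2,1,1)}$, whose values separate $\cc{q.1}$ from $\cc{q.2}$ in a genuinely new way and pin $\varepsilon_{\cc{p.1}}(u)$ to $p(p-3q)/\bigl((p-q)(p-2q)\bigr)$ or $p/(p-2q)$; even then one stubborn family $(p,q,n)=(2q-1,q,3q-2)$ survives and requires a third character, $\chi_{(n-3,2,1)}$, to produce a negative multiplicity. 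So the missing ingredient is not bookkeeping but the identification of these further characters. Two smaller points: the theorem as stated allows $p=q$, which the paper dispatches by citing the known result on units of order $p^2$ in $\V(\ZZ S_n)$; and the possibility $(\varepsilon_{\cc{q.1}}(u^p),\varepsilon_{\cc{q.2}}(u^p))=(2,-1)$ cannot be excluded at the order-$q$ stage and has to be carried through the whole argument.
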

 
 We use this together with some computations to give a positive answer to the Prime Graph Question for all almost simple groups having $A_n$ as socle if $7 \leq n \leq 17$, so that (PQ) is known for this kind of groups if $n \leq 17$ (\cref{prop:PQ_for_small_asg}).

 \section{Preliminaries}
 
 We will recall the facts about symmetric groups needed here, for a more detailed account we refer to the standard references \cite{FultonHarris,James}.  %JamesKerber
 
 Let $S_n$ denote the symmetric group of degree $n$. The conjugacy classes of $S_n$ can be labeled by the partitions of $n$. For a prime $r$ and a positive integer $j$ with $rj \leq n$ we denote by $\cc{r.j}$ the conjugacy class of $S_n$ corresponding to the partition $(r^j, 1^{n -rj})$, the permutations consisting of $j$ disjoint $r$-cycles. Note that there are exactly $\lfloor \frac{n}{r} \rfloor$ conjugacy classes of elements of prime order $r$ in $S_n$. For two different primes $p$ and $q$ there is an element of order $pq$ in $S_n$ if and only if $p + q \leq n$.
 
 Also the ordinary irreducible representations are in natural bijection with the partitions of $n$. For a partition $\lambda$ of $n$, we will denote by $\chi_\lambda$ the ordinary irreducible character afforded by the Specht module $S^\lambda$. In particular, $\chi_{(n)} = 1$ is the principal character of $S_n$ and $\chi_{(1^n)} = \sgn$ is the alternating character of $S_n$.  Set $\pi = \chi_{(n-1,1)}$. Then $\pi$ is the so-called ``deleted permutation character'' (i.e.\ the character of $S_n$ coming from its natural action on $n$ points minus its trivial constituent).  Clearly, $\pi(1) = n-1$ and for a prime $r$ and $j\in \{1, ..., \lfloor \frac{n}{r} \rfloor\}$ we have $\pi(\cc{r.j}) = \pi(1) -rj$.
 
 Let $\chi$ be an ordinary character of the symmetric group and let $D$ be an integral representation affording $\chi$. We can extend $D$ linearly to a representation of the integral group ring $\ZZ S_n$ and then restrict it to a representation $\V(\ZZ S_n) \to \GL_{d}(\ZZ)$ for $d = \chi(1)$.  We will denote this representation also by $D$ and its character by $\chi$. 
 
 We will use the so-called Luthar-Passi method which allows to calculate the multiplicities of eigenvalues of torsion units under a representation.
 
 \begin{proposition}[Luthar, Passi {\cite[Theorem 1]{LP89}}]\label{LuPa}  Let $G$ be a finite group and $u \in \V(\ZZ G)$ a torsion unit of order $k$. Let $\zeta \in \mathbb{C}^\times$ be a primitive $k$-th root of unity. 
 Let $\chi$ be an ordinary character of $G$ and let $D$ be a representation affording $\chi$. Then the multiplicity of $\zeta^\ell$ as an eigenvalue of $D(u)$ is given by
  \begin{equation*} \mu_\ell(u, \chi) = \frac{1}{k}\sum_{d \mid k} \operatorname{Tr}_{\QQ(\zeta^d)/\QQ}(\chi(u^d)\zeta^{-d \ell}).\end{equation*} 
 \end{proposition}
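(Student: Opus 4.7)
The overall plan is to reduce the statement to a discrete-Fourier-style inversion on the group $\ZZ/k\ZZ$, then use Galois theory to regroup the resulting sum. Since $u$ is a torsion unit of order $k$, we have $D(u)^k = I$, so the minimal polynomial of $D(u)$ divides $X^k - 1$, which has distinct roots. Hence $D(u)$ is diagonalizable and all its eigenvalues lie in the group of $k$-th roots of unity. If we denote by $\mu_\ell = \mu_\ell(u,\chi)$ the multiplicity of $\zeta^\ell$ as an eigenvalue of $D(u)$ (for $\ell \in \ZZ/k\ZZ$), then for every $d \in \ZZ$ we have
\begin{equation*}
 \chi(u^d) \;=\; \operatorname{Tr} D(u)^d \;=\; \sum_{\ell = 0}^{k-1} \mu_\ell \, \zeta^{d\ell}.
\end{equation*}

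I would then invoke the orthogonality relations for characters of the cyclic group $\ZZ/k\ZZ$, namely $\frac{1}{k}\sum_{d=0}^{k-1} \zeta^{d(\ell' - \ell)} = \delta_{\ell,\ell'}$, to invert this system. This yields the plain Fourier formula
\begin{equation*}
 \mu_\ell \;=\; \frac{1}{k}\sum_{d=0}^{k-1} \chi(u^d)\,\zeta^{-d\ell}.
\end{equation*}

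To match the stated formula, the last step is to collect terms according to $e := \gcd(d,k)$. For each divisor $e$ of $k$, the elements $d \in \ZZ/k\ZZ$ with $\gcd(d,k) = e$ are exactly those of the form $d = ea$ with $a \in (\ZZ/(k/e)\ZZ)^\times$. Under the Galois action, $\sigma_a \in \operatorname{Gal}(\QQ(\zeta^e)/\QQ)$ (where $\zeta^e$ is a primitive $(k/e)$-th root of unity) satisfies $\sigma_a(\zeta^e) = \zeta^{ea}$; since $\chi$ takes values in a cyclotomic field and commutes with Galois action on eigenvalues, $\sigma_a(\chi(u^e)) = \chi(u^{ea})$. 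Both $\chi(u^e)$ and $\zeta^{-e\ell}$ lie in $\QQ(\zeta^e)$, so summing $\sigma_a(\chi(u^e)\zeta^{-e\ell})$ over $a \in (\ZZ/(k/e)\ZZ)^\times$ produces $\operatorname{Tr}_{\QQ(\zeta^e)/\QQ}(\chi(u^e)\zeta^{-e\ell})$. Summing over divisors $e \mid k$ and renaming $e$ back to $d$ gives the claimed expression.

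The proof is essentially formal once the two ingredients — diagonalizability of $D(u)$ and the compatibility of character values with Galois conjugation — are in place, so no serious obstacle is expected. The most delicate bookkeeping is the regrouping step: one must verify the bijection between $\{0,\ldots,k-1\}$ and pairs $(e,a)$ with $e \mid k$ and $a \in (\ZZ/(k/e)\ZZ)^\times$ (covering also the case $d = 0$, which corresponds to $e = k$ and the trivial Galois group), and confirm that $\QQ(\zeta^e)$ really is the $(k/e)$-th cyclotomic field so that the Galois trace has the right degree $\varphi(k/e)$.
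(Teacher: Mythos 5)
The paper does not prove this proposition; it is imported verbatim from Luthar--Passi \cite[Theorem~1]{LP89}, so there is no internal proof to compare against. Your argument is correct and is the standard one (and essentially the original one): diagonalizability of $D(u)$ from $D(u)^k = I$, discrete Fourier inversion of $\chi(u^d) = \sum_\ell \mu_\ell \zeta^{d\ell}$ over $\ZZ/k\ZZ$, and regrouping the sum over $d$ by $\gcd(d,k)$ into Galois orbits, using that $\chi(u^e)$ and $\zeta^{-e\ell}$ lie in $\QQ(\zeta^e)$ and that $\sigma_a(\chi(u^e)) = \chi(u^{ea})$, so each orbit sum is the field trace $\operatorname{Tr}_{\QQ(\zeta^d)/\QQ}$. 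The bookkeeping points you flag (the bijection $d \leftrightarrow (e,a)$ including $d=0 \leftrightarrow e=k$, and $\QQ(\zeta^e)$ being the $(k/e)$-th cyclotomic field) are exactly the right things to check, and they all go through.
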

  
 Quite some of information about a torsion unit is encoded in its partial augmentations: For an element $u = \sum_{g \in G} u_g g \in \ZZ G$ and a conjugacy class $C$ of $G$, $\varepsilon_C(u) = \sum_{g \in C} u_g$ denotes its \emph{partial augmentation} at the conjugacy class $C$.
 The last proposition can be used to show rational conjugacy of torsion units in the following way: two torsion units $u, v \in\ZZ G$ of the same order $k$, are rationally conjugate, if and only if $u^d$ and $v^d$ have the same partial augmentations for all divisors $d$ of $k$ \cite[Theorem 2.5]{MRSW}. In particular there is no torsion unit of order $k$ in $\V(\ZZ G)$ with a prescribed tuple of partial augmentations $(\varepsilon_C(u^d))_{C, d}$, where $C$ runs through the conjugacy classes of $G$ and $d$ runs through the divisors of $k$, if $\mu_\ell(u, \chi)$  is not integral or negative for some $\ell$ and some character $\chi$ of $G$. Certain partial augmentations are a priori known to be zero: for a torsion unit $u \in \V(\ZZ G)$ of order $k$, $\varepsilon_1(u) = 0$ if $k \not= 1$ \cite[Proposition (1.4)]{SehgalBook2} and $\varepsilon_C(u) = 0$, whenever $k$ does not divide the order of $x$ for some $x \in C$ \cite[Theorem 2.3]{HertweckBrauer}. 
  
 We will repeatedly need the multiplicities of certain roots of unity under the representation $P$ corresponding to the character $\pi = \chi_{(n-1,1)}$, so we will collect them in the following lemma.
 
 \begin{lemma}\label{prop:mu1pi} \begin{enumerate} 
 \item \label{prop:mu1rpi} Let $r$ be a prime and let $v \in \V(\ZZ S_n)$ be a unit of order $r$. The multiplicity of a primitive $r$-th root of unity as eigenvalue of $P(v)$ equals $\mu_1(v, \pi) = \sum_{j = 1}^{\lfloor \frac{n}{r} \rfloor} j \varepsilon_{\cc{r.j}}(v)$.
 \item \label{prop:mu1pqpi}  Let $u \in \V(\ZZ S_n)$ be an element of order $pq$ for two different primes $p, q$ with $p + q > n$. Then the multiplicity of a primitive $pq$-th root of unity as an eigenvalue of $P(u)$ is given by \begin{equation*} \mu_1(u, \pi) =  \frac{1}{pq} \Bigg[ q  \sum_{j = 1}^{ \lfloor \frac{n}{q} \rfloor } j \left( \varepsilon_{\cc{q.j}}(u^p)  -  \ \varepsilon_{\cc{q.j}}(u) \right)  +  p \sum_{k = 1}^{ \lfloor \frac{n}{p} \rfloor } k \left( \varepsilon_{\cc{p.k}}(u^q) -  \varepsilon_{\cc{p.k}}(u) \right)  \Bigg]. \end{equation*}
 \end{enumerate}
 \end{lemma}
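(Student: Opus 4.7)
Both parts are applications of the Luthar-Passi formula (\cref{LuPa}), reduced by the standard constraints on partial augmentations of torsion units.

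For (1), with $k = r$ prime, the only divisors are $1$ and $r$, and the Galois trace of any primitive $r$-th root of unity over $\QQ$ equals $-1$. Thus Luthar-Passi collapses to
\[
\mu_1(v,\pi) = \frac{\pi(1) - \pi(v)}{r}.
\]
For $v$ of prime order $r$, the only conjugacy classes of $S_n$ that can support nonzero partial augmentations are the $\cc{r.j}$, $j=1,\ldots,\lfloor n/r\rfloor$. Combining the augmentation-$1$ identity $\sum_j \varepsilon_{\cc{r.j}}(v) = 1$ with the evaluation $\pi(\cc{r.j}) = \pi(1) - rj$ gives
\[
\pi(v) = \pi(1) - r\sum_j j\,\varepsilon_{\cc{r.j}}(v),
\]
and the claimed formula drops out immediately.

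For (2), apply Luthar-Passi with $k = pq$ over the four divisors $1,p,q,pq$. Since $\mu(pq) = 1$ and $\mu(p) = \mu(q) = -1$, the Galois traces of $\zeta^{-d}$ yield $+1, -1, -1, +1$ respectively, producing
\[
\mu_1(u,\pi) = \frac{1}{pq}\bigl[\pi(u) - \pi(u^p) - \pi(u^q) + \pi(1)\bigr].
\]
Here $u^p$ has prime order $q$ and $u^q$ has prime order $p$, so part (1) applies directly to each of them and gives $\pi(u^p) = \pi(1) - q\sum_j j\,\varepsilon_{\cc{q.j}}(u^p)$ and $\pi(u^q) = \pi(1) - p\sum_k k\,\varepsilon_{\cc{p.k}}(u^q)$. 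For $u$ itself, the hypothesis $p + q > n$ is exactly what guarantees that $S_n$ contains no element of order $pq$, so the only classes that can support nonzero partial augmentations of $u$ are the $\cc{p.k}$ and $\cc{q.j}$. Augmentation-$1$ together with the same evaluation of $\pi$ then yields
\[
\pi(u) = \pi(1) - q\sum_j j\,\varepsilon_{\cc{q.j}}(u) - p\sum_k k\,\varepsilon_{\cc{p.k}}(u).
\]
Substituting the three expressions into the Luthar-Passi sum, the four occurrences of $\pi(1)$ cancel in pairs, leaving precisely the stated identity.

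The essential obstacle is not the arithmetic but the correct identification of admissible conjugacy classes for each of $u$, $u^p$, $u^q$; this is where Hertweck's theorem, the Berman-Higman condition, and the arithmetic hypothesis $p+q>n$ are each used. Once those lists are pinned down, everything reduces to combining $\pi(\cc{r.j}) = \pi(1) - rj$ with $\sum \varepsilon_{\cc{r.j}} = 1$ and reading off the Möbius values $\mu(p),\mu(q),\mu(pq)$.
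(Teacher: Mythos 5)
Your proposal is correct and follows essentially the same route as the paper: apply the Luthar--Passi formula (with the Galois traces of the relevant roots of unity), use Berman--Higman and Hertweck's restriction on partial augmentations together with $p+q>n$ to identify the admissible classes, and substitute $\pi(\cc{r.j}) = \pi(1) - rj$ with the augmentation-one identity. Nothing further is needed.
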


 \begin{proof} \begin{enumerate} \item By what we noted above, the element $v$ can only have non-zero partial augmentations at conjugacy classes of elements of order $r$. The multiplicity of a primitive $r$-th root of unity of $v$ under the representation $P$ can be calculated by the Luthar-Passi-formula (\cref{LuPa}) to be  \[ \mu_1(v, \pi) = \frac{1}{r} \Big( \pi(1) - \sum_{j = 1}^{\lfloor \frac{n}{r} \rfloor} \varepsilon_{\cc{r.j}}(v) (\pi(1) - rj) \Big) = \sum_{j = 1}^{\lfloor \frac{n}{r} \rfloor} j \varepsilon_{\cc{r.j}}(v), \] using that $v$ is normalized. 
 \item By assumption, $S_n$ has no elements of order $pq$. Using the Luthar-Passi-formula and the character values of $\pi$ we get 
 \begin{align*}\begin{split} \mu_1(u, \pi)  & = \frac{1}{pq} \Bigg[ \pi(1) - \pi(u^q) - \pi(u^p) + \pi(u)   \Bigg] \\ & =  \frac{1}{pq} \Bigg[ \pi(1) - \Big(\pi(1) - q \sum_{j = 1}^{ \lfloor \frac{n}{q} \rfloor } j \varepsilon_{\cc{q.j}}(u^p)\Big) - \Big(\pi(1) - p\sum_{k = 1}^{ \lfloor \frac{n}{p} \rfloor } k \varepsilon_{\cc{p.k}}(u^q)\Big)  \\ & \hspace{3cm} +  \Big(\pi(1) -  q\sum_{j = 1}^{ \lfloor \frac{n}{q} \rfloor } j \varepsilon_{\cc{q.j}}(u) -  p \sum_{k = 1}^{ \lfloor \frac{n}{p} \rfloor } k \varepsilon_{\cc{p.k}}(u) \Big)  \Bigg]  \\ & =  \frac{1}{pq} \Bigg[ q  \Big( \sum_{j = 1}^{ \lfloor \frac{n}{q} \rfloor } j \varepsilon_{\cc{q.j}}(u^p)  -  \sum_{j = 1}^{ \lfloor \frac{n}{q} \rfloor } j \varepsilon_{\cc{q.j}}(u) \Big)  +  p \Big(\sum_{k = 1}^{ \lfloor \frac{n}{p} \rfloor } k \varepsilon_{\cc{p.k}}(u^q) - \sum_{k = 1}^{ \lfloor \frac{n}{p} \rfloor } k \varepsilon_{\cc{p.k}}(u) \Big)  \Bigg].\qedhere \label{eq:mu1chi} \end{split}  \end{align*} % where we exploited for the second equality the fact that that $u$ is normalized. 
 \end{enumerate} \end{proof}

 \begin{lemma}\label{prop:mu1is0} Let $n \geq 7$ be an integer and let $p, q$ be primes with $p > \frac{n}{2}$ and $n \geq q \geq 3$. Assume $u \in \V(\ZZ S_n)$ is of order $pq$. Then $\mu_1(u, \pi) = 0$ and $\mu_q(u, \pi) = 1$. If, additionally, $p + q \geq n$, we have $\mu_p(u, \pi) = \mu_1(u^p, \pi) \in \{0, 1\}$ and $1$ is possible only if $p + q \in \{n, n+1\}$. \end{lemma}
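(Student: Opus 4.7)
The plan is to translate the statement into a question about the eigenvalues of $P(u)$ and then exploit the integrality of $P$ together with \cref{prop:mu1pi}\,(1) applied to $u^q$. Since $P$ is an integral representation and $u^{pq}=1$, the characteristic polynomial of $P(u)$ lies in $\ZZ[x]$ and its roots are $pq$-th roots of unity, so it factors as
\[\det(xI - P(u)) = (x-1)^{a}\,\Phi_p(x)^{b}\,\Phi_q(x)^{c}\,\Phi_{pq}(x)^{d}\]
for non-negative integers $a,b,c,d$. These are precisely the multiplicities $\mu_0(u,\pi)$, $\mu_q(u,\pi)$, $\mu_p(u,\pi)$ and $\mu_1(u,\pi)$ respectively, and comparison of degrees yields the dimension relation $a+(p-1)b+(q-1)c+(p-1)(q-1)d=n-1$.

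Next, I would read off $\mu_1(u^q,\pi)$ from this decomposition: raising every eigenvalue of $P(u)$ to the $q$-th power sends $1$ and every primitive $q$-th root of unity to $1$, permutes primitive $p$-th roots of unity among themselves, and is $(q-1)$-to-$1$ from primitive $pq$-th roots onto primitive $p$-th roots. A short bookkeeping then gives $\mu_1(u^q,\pi)=b+(q-1)d$. On the other hand, $u^q$ has order $p$, and because $p>n/2$ the only conjugacy class of $p$-elements in $S_n$ is $\cc{p.1}$; augmentation one together with $\varepsilon_1(u^q)=0$ forces $\varepsilon_{\cc{p.1}}(u^q)=1$, so \cref{prop:mu1pi}\,(1) delivers $\mu_1(u^q,\pi)=1$. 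Since $q\geq 3$, the equation $b+(q-1)d=1$ admits only the non-negative integer solution $b=1$, $d=0$, which immediately gives $\mu_1(u,\pi)=0$ and $\mu_q(u,\pi)=1$.

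With $d=0$ in hand, the analogous computation applied to $u^p$ gives $\mu_1(u^p,\pi)=c+(p-1)d=c=\mu_p(u,\pi)$, which is the identity in the second claim. Plugging $b=1$ and $d=0$ into the dimension relation simplifies it to $a+(q-1)c=n-p$. Under the extra hypothesis $p+q\geq n$ the right-hand side is at most $q$, so $(q-1)c\leq q$; since $q\geq 3$, this forces $c\in\{0,1\}$. If $c=1$, then $a=n+1-p-q$, and the requirement $a\geq 0$ combined with $p+q\geq n$ pins down $p+q\in\{n,n+1\}$.

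The main delicate point will be the eigenvalue bookkeeping --- in particular, justifying that the spectrum of $P(u)$ splits into precisely four multiplicity classes, one per Galois orbit of $pq$-th roots of unity, so that everything collapses into the four integers $a,b,c,d$. This is forced by the integrality of $P$: the characteristic polynomial sits in $\ZZ[x]$ and therefore cannot split the irreducible cyclotomic factors $\Phi_1,\Phi_p,\Phi_q,\Phi_{pq}$. Stating this explicitly at the outset, before tracking how the four classes are transformed by $u\mapsto u^q$ and $u\mapsto u^p$, should keep the argument clean.
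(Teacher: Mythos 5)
Your argument is correct and is essentially the paper's own proof: the same decomposition of the spectrum of $P(u)$ into the four Galois-orbit blocks $(x-1)^a\Phi_p^b\Phi_q^c\Phi_{pq}^d$, the same use of the unique class $\cc{p.1}$ to force exactly one block of primitive $p$-th roots in $P(u^q)$ (hence $b+(q-1)d=1$, so $d=0$, $b=1$ since $q\geq 3$), and the same degree count to bound $c$. The only cosmetic difference is that you extract $\mu_1(u^q,\pi)=1$ from \cref{prop:mu1pi}\,(1) and $\varepsilon_{\cc{p.1}}(u^q)=1$, whereas the paper invokes rational conjugacy of $u^q$ to an element of $\cc{p.1}$; these are interchangeable.
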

 
 \begin{proof} Let $P$ be a integral representation affording $\pi = \chi_{(n-1,1)}$ and extend it as explained above to a representation of $\V(\ZZ S_n)$. We can diagonalize $P(u)$ over $\mathbb{C}$. As $P(u)$ is a rational matrix, we can write
\[ P(u) \sim \text{diag}\left( \eigbox{m_1}{1},\ \eigbox{m_q}{\zeta_q, ..., \zeta_q^{q-1}},\ \eigbox{m_p}{\zeta_p, ....., \zeta_p^{p-1}},\ \eigbox{m_{pq}}{\zeta_{pq}, ....., \zeta_{pq}^{(p-1)(q-1)}} \right) \] i.e.\ \begin{itemize}
\item $m_1 = \mu_0(u, \pi)$ is the number of $1$'s as eigenvalues of $P(u)$,
\item $m_q = \mu_p(u, \pi)$ the number of $(q-1) \times (q-1)$-blocks with primitive $q$-th roots of unity,
\item $m_p = \mu_q(u, \pi)$ the number of $(p-1) \times (p-1)$-blocks with primitive $p$-th roots of unity,
\item $m_{pq} = \mu_1(u, \pi)$ the number of $(q-1)(p-1) \times (q-1)(p-1)$-blocks with primitive $pq$-th roots of unity.
\end{itemize} Then we have \[ P(u^q) \sim \text{diag}\left( \eigbox{(m_1 + (q-1)m_q)}{1},\ \eigbox{(m_p + (q-1)m_{pq})}{\zeta_p, ....., \zeta_p^{p-1}} \right). \] On the other hand, as there is only one conjugacy class, $\cc{p.1}$, with elements of order $p$ in $S_n$, we have that $u^q$ is rationally conjugate to an element in $\cc{p.1}$ and hence there is exactly one block with primitive $p$-th roots of unity in the diagonalized version of $P(u^q)$, forcing $m_p + (q-1)m_{pq} = 1$, thus $\mu_1(u, \pi) = m_{pq} = 0$ and $\mu_q(u, \pi) = m_{p} = 1$, as $q \geq 3$.

Assume now that $p + q \geq n$. Then $n - 1 = \pi(1) = m_1 + m_q(q-1) + (p-1)$, forcing $m_q \leq 1$. As $m_{pq} = 0$ we get $\mu_1(u^p, \pi) = m_q$.\end{proof}

\section{Units of Odd Order}

In this section we will deal with units of order $pq$ for odd primes $p$ and $q$. We will prove the main result of this article, but we will 
first give an easy example.

\begin{nexample}\label{ex:order_3_5} We show that there is no unit of order $3 \cdot 5$ in the unit group of $\ZZ S_7$.

Assume by means of contradiction that $u$ is such a unit. By the remarks in the preliminaries, $u$ can only have non-zero partial augmentations at the classes $\cc{3.1}$, $\cc{3.2}$, and $\cc{5.1}$. Consider the character $\chi$ belonging to the hook partition $(4, 1^3)$ of $7$. By Frobenius' formula \cite[4.10]{FultonHarris} it has the following values:
\begin{center}
 \noindent\begin{tabular}{ccccc} \toprule 
        & $1$ & $\cc{3.1}$ & $\cc{3.2}$ & $\cc{5.1}$ \\ \midrule \\[-.38cm]
   $\chi = \chi_{(4, 1^3)}$ & $20$ & $2$ & $2$ & $0$ \\
   \bottomrule
 \end{tabular}
\end{center}
As the character is constant on all classes of elements order $3$, the multiplicities of the eigenvalues do not depend on the partial augmentations of $u^5$ (and $u^3$). Using that $u$ has augmentation $1$, we get by the Luthar-Passi-formula (\cref{LuPa}) for the multiplicity of $1$ and of a primitive third root of unity as an eigenvalue of $u$ under a representation affording $\chi$ \[
  \begin{tabular}{rcrcl}
  $\mu_0(u, \chi)$ & $=$ & $-\frac{16}{15}\varepsilon_{\cc{5.1}}(u)$ & $+\frac{8}{3}$ \\[.1cm]
  $\mu_5(u, \chi)$ & $=$ & $\frac{8}{15}\varepsilon_{\cc{5.1}}(u)$ & $+\frac{2}{3}$.
  \end{tabular}
  \]
It is easy to see that there is no $\varepsilon_{\cc{5.1}}(u) \in \ZZ$ such that both multiplicities are non-negative integers. 
\end{nexample}

The idea to use characters constant on conjugacy classes of the involved orders was introduced in \cite{BKHS}.  However it turns out that in the setting of symmetric groups the degrees of such characters are in general too big to provide enough information to exclude the existence of the units in question. Hence the strategy to prove the following theorem has to be different.

\begin{theorem}\label{big_p_q} Let $n \in \ZZ_{\geq 1}$ and $p, q$ be two primes with $p, q > \frac{n}{3}$. Then there is an element of order $pq$ in $S_n$ if and only if there is an element of order $pq$ in $\V(\ZZ S_n)$. \end{theorem}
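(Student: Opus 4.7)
The forward direction is immediate, so I argue the converse by contrapositive. Suppose $S_n$ has no element of order $pq$, equivalently $p + q > n$; I want to show $\V(\ZZ S_n)$ contains no torsion unit of order $pq$. Without loss of generality $p > q$, so $p > n/2$ and $\cc{p.1}$ is the only conjugacy class of elements of order $p$ in $S_n$. Cohn--Livingstone forces $p, q \leq n$. Small parameters where the hypothesis $n \geq 7$ of \cref{prop:mu1is0} fails, or where $q = 2$ (which together with $q > n/3$ forces $n \leq 5$), are handled by direct computation or by results already cited in the introduction.

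Assume now $n \geq 7$ and $p,q$ both odd, and let $u \in \V(\ZZ S_n)$ have order $pq$. By \cref{prop:mu1is0} we have $\mu_1(u,\pi) = 0$. Plugging this into \cref{prop:mu1pqpi}, using that $u^q$ is rationally conjugate to an element of $\cc{p.1}$ (so $\sum_k k\,\varepsilon_{\cc{p.k}}(u^q) = 1$), that $u$ has non-zero partial augmentations only at $\cc{p.1}$ and the $\cc{q.j}$, that $\varepsilon_{\cc{p.1}}(u) = 1 - \sum_j \varepsilon_{\cc{q.j}}(u)$, and that $\sum_j j\,\varepsilon_{\cc{q.j}}(u^p) = \mu_1(u^p,\pi)$ by \cref{prop:mu1rpi}, the identity collapses to
\[
\sum_{j=1}^{\lfloor n/q \rfloor}(qj - p)\,\varepsilon_{\cc{q.j}}(u) \;=\; q\,\mu_1(u^p,\pi),
\]
where the right-hand side lies in $\{0, q\}$ by \cref{prop:mu1is0}, the value $q$ being possible only when $p + q \in \{n, n+1\}$.

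The hypothesis $q > n/3$ leaves only $\lfloor n/q \rfloor \in \{1, 2\}$. If $q > n/2$ the sum has a single term and $\mu_1(u^p,\pi) = \varepsilon_{\cc{q.1}}(u^p) = 1$, giving $\varepsilon_{\cc{q.1}}(u) = q/(q-p)$; since $\gcd(p-q,q) = \gcd(p,q) = 1$, integrality forces $p - q = 1$, impossible for distinct odd primes. If $n/3 < q \leq n/2$ then both classes $\cc{q.1}$ and $\cc{q.2}$ contribute and the single equation above is not enough. I would extract a second relation by applying \cref{prop:mu1rpi} directly to $u^p$ (whose partial augmentations are themselves pinned down by \cref{prop:mu1is0}) and by exploiting the non-negativity of the remaining multiplicities $\mu_\ell(u,\pi)$, and then eliminate the surviving integer solutions.

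The main obstacle is the boundary regime $p + q \in \{n, n+1\}$ with $q \leq n/2$: there $\mu_1(u^p,\pi)$ may equal $1$ and the displayed linear identity admits integer solutions, so the contradiction must come from finer non-negativity constraints on the eigenvalue multiplicities of the deleted-permutation representation $P$ at $u$ and $u^p$, combined with the rigid structure of $p$- and $q$-elements in $S_n$.
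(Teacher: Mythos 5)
Your first case ($\lfloor n/q\rfloor = 1$, i.e.\ $q > n/2$) is correct and is essentially the paper's argument: the identity $\sum_j (qj-p)\varepsilon_{\cc{q.j}}(u) = q\,\mu_1(u^p,\pi)$ checks out, and integrality of $q/(q-p)$ does force $|p-q|=1$. The gap is in the case $\lfloor n/q\rfloor = 2$, which you leave as a sketch, and the strategy you propose for it cannot succeed. You say you will finish by ``exploiting the non-negativity of the remaining multiplicities $\mu_\ell(u,\pi)$,'' but the deleted permutation character yields no further constraints on the partial augmentations of $u$ itself. Since $\pi$ is rational-valued, $P(u)$ has only four distinct Galois-orbit multiplicities $m_1, m_q, m_p, m_{pq}$; they satisfy the single relation $m_1 + (q-1)m_q + (p-1)m_p + (p-1)(q-1)m_{pq} = n-1$, and $m_p$ and $m_q$ are already pinned down by the partial augmentations of $u^q$ and $u^p$ together with $m_{pq} = \mu_1(u,\pi)$ (this is exactly the content of \cref{prop:mu1is0}). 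So the only genuinely new linear condition on the three unknowns $\varepsilon_{\cc{q.1}}(u), \varepsilon_{\cc{q.2}}(u), \varepsilon_{\cc{p.1}}(u)$ coming from $P$ is $\mu_1(u,\pi) = 0$; together with $\varepsilon(u) = 1$ that is two equations in three unknowns, and a one-parameter family of integer solutions survives no matter how you combine the multiplicities.

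This is why the paper's proof must leave the representation $P$ at this point: it brings in $\rho = \chi_{(n-2,1,1)}$ to get a third independent linear equation $\mu_1(u,\rho) = t \in \{0,1\}$, solves the resulting $3\times 3$ system for $\varepsilon_{\cc{p.1}}(u) = \frac{p(p-q(3-2t))}{(p-q)(p-2q)}$, and analyzes integrality. Even that is not enough: the residual family $p = 2q-1$, $n = 3q-2$ (where an integral solution $(1, p, -p)$ of the system exists) requires a third character $\tau = \chi_{(n-3,2,1)}$, for which $\mu_p(u,\tau) = -\frac{1}{3}q(q-4) < 0$ gives the final contradiction. Your concluding paragraph correctly identifies the boundary regime $p+q \in \{n, n+1\}$ as the hard part, but the ``finer non-negativity constraints on the eigenvalue multiplicities of $P$'' you appeal to there do not exist; the missing idea is the passage to higher-degree Specht characters.
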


 \begin{proof} It clearly is enough to prove the sufficiency. Assume that $u \in \V(\ZZ S_n)$ is a unit of order $pq$ and there is no element of order $pq$ in $S_n$, i.e.\ $p + q > n$. By \cite[Corollary 4.1]{CL} there is an element of order $p^2$ in $S_n$ if and only if there is such an element in $\V(\ZZ S_n)$. Hence, we may from now on assume without loss of generality that $p > q$. By \cite{KiKoStAndrews} the Prime Graph Question has a positive answer for $S_n$ if $n \leq 6$. Together with \cref{ex:order_3_5} this implies that we can assume $p \geq 7$ and $q \geq 3$. We can also assume that $p > \frac{n}{2}$, for otherwise there is an element of order $pq$ in $S_n$. Thus there is only one conjugacy class, $\cc{p.1}$, of elements of order $p$ in $S_n$ and $u^q$ is rationally conjugate to an element in that class. We will distinguish two cases: $q > \frac{n}{2}$ or $\frac{n}{2} \geq q > \frac{n}{3}$.

Assume $\lfloor \frac{n}{q} \rfloor = 1$: In this case also the element $u^p$ of order $q$ is rationally conjugate to a group element which lies in the class $\cc{q.1}$. Now $\mu_1(u, \pi)$, as calculated in \cref{prop:mu1pi}\eqref{prop:mu1pqpi}, simplifies to \[\mu_1(u, \pi) =  \frac{1}{pq} \Bigg[ q  \Big( 1  - \varepsilon_{\cc{q.1}}(u) \Big)  +  p \Big( 1 -  \varepsilon_{\cc{p.1}}(u) \Big) \Bigg] = \frac{1}{pq} \Bigg[ \varepsilon_{\cc{p.1}}(u)(q-p) + p  \Bigg], \] using that $1 = \varepsilon(u) = \varepsilon_{\cc{p.1}}(u) + \varepsilon_{\cc{q.1}}(u)$. As $\mu_1(u, \pi)  = 0$ by \cref{prop:mu1is0}, we get $\varepsilon_{\cc{p.1}}(u) = \frac{p}{p-q}$, which has to be an integer, hence $(p, q) = (3, 2)$ and $n = 3$, a case for which the result is known.

Assume $\lfloor \frac{n}{q} \rfloor = 2$: By \cref{prop:mu1pi}\eqref{prop:mu1rpi} we get $\mu_1(u^p, \pi) = \varepsilon_{\cc{q.1}}(u^p) + 2\varepsilon_{\cc{q.2}}(u^p)$ and by \cref{prop:mu1is0} this has to be equal to $0$ or $1$. Note in particular that $\mu_1(u^p, \pi) = 1$ is possible only if $n = p + q -1$. Together with $\varepsilon_{\cc{q.1}}(u^p) + \varepsilon_{\cc{q.2}}(u^p) = \varepsilon(u^p) = 1$ we get the following possible partial augmentations for $u^p$:
\begin{align*} & (\varepsilon_{\cc{q.1}}(u^p), \varepsilon_{\cc{q.2}}(u^p)) = (2, -1) \\ \text{or} \quad & (\varepsilon_{\cc{q.1}}(u^p), \varepsilon_{\cc{q.2}}(u^p)) = (1, 0)\quad  \text{ and } n = p + q -1.\end{align*}
 
 We now have to use another character, $\rho = \chi_{(n - 2, 1, 1)}$. Its values can be computed e.g.\ using Frobenius' formula for character values \cite[4.10]{FultonHarris}. For the convenience of the reader we state the relevant values in the following table
\begin{center}
  {\footnotesize \noindent\begin{tabular}{ccccc} \toprule
        & $1$ & $\cc{q.1}$ & $\cc{q.2}$ & $\cc{p.1}$ \\ \midrule \\[-.38cm]
   $\pi = \chi_{(n-1,1)}$ & $n-1$ & $\pi(1)-q$ & $\pi(1)-2q$ & $\pi(1)-p$ \\
   $\rho = \chi_{(n-2,1,1)}$ & $\frac{1}{2}(n-1)(n-2)$ & $\rho(1) - \frac{1}{2}q(2n-q-3)$ & $\rho(1) - q(2n-2q-3)$ & $\rho(1) - \frac{1}{2}p(2n-p-3)$ \\
   \bottomrule
 \end{tabular}}
\end{center}

\vspace{.3cm}

Assume that $(\varepsilon_{\cc{q.1}}(u^p), \varepsilon_{\cc{q.2}}(u^p)) = (2, -1)$. Then, from the Luthar-Passi-formula (\cref{LuPa}):
\[ \mu_1(u^p, \rho) = \frac{1}{q} \Big( \rho(1) - 2(\rho(1) - q\frac{2n-q-3}{2}) - (\rho(1) - q(2n - 2q - 3)) \Big) = q. \] On the other hand, this has to be equal to $\mu_p(u, \rho) + (p-1)\mu_1(u, \rho)$ forcing $\mu_1(u, \rho) = 0$ and $\mu_p(u, \rho) = q$. Assume now that $(\varepsilon_{\cc{q.1}}(u^p), \varepsilon_{\cc{q.2}}(u^p)) = (1, 0)$ then necessarily $n = p + q - 1$ as remarked above. Then the Luthar-Passi-formula gives
\[ \mu_1(u^p, \rho) = \frac{1}{q} \Big( \rho(1) - (\rho(1) - q\frac{2n-q-3}{2}) \Big) = \frac{2n-q-3}{2} \leq n - 3, \] using that $q \geq 3$. As this has to coincide with $\mu_p(u, \rho) + (p-1)\mu_1(u, \rho)$ and as $p > \frac{n}{2}$ it follows that $\mu_1(u, \rho) \in \{0, 1\}$. Hence we have
 \begin{align*} \mu_1(u, \rho) = 0 &  \\ \text{or } \quad  \mu_1(u, \rho) = 1 & \quad \text{ and } \quad n = p + q -1.\end{align*}
 
 From \cref{prop:mu1pi}\eqref{prop:mu1pqpi} and the Luthar-Passi-formula for the character $\rho$ we get
{\footnotesize 
\begin{align*} \mu_1(u, \pi) & = \frac{1}{pq} \Bigg[ -q \varepsilon_{\cc{q.1}}(u) -2q\varepsilon_{\cc{q.2}}(u) -p \varepsilon_{\cc{p.1}}(u) + q\Big( \varepsilon_{\cc{q.1}}(u^p) + 2\varepsilon_{\cc{q.2}}(u^p) \Big)  +  p   \Bigg],  \\ \mu_1(u, \rho) & = \frac{1}{2pq} \Bigg[  - (2n-q-3) q \varepsilon_{\cc{q.1}}(u)  - 2(2n-2q-3)q \varepsilon_{\cc{q.2}}(u) - (2n-p-3)p \varepsilon_{\cc{p.1}}(u) \\ & \hspace{2cm} + q\Big( (2n-q-3)\varepsilon_{\cc{q.1}}(u^p) + 2 (2n-2q-3)\varepsilon_{\cc{q.2}}(u^p) \Big) + p(2n-p-3) \Bigg]. \end{align*}}
We have the system of linear equations:
\begin{align} \begin{split} \mu_1(u, \pi) & = 0, \\ \mu_1(u, \rho) & = t \in \{0, 1\}, \\ \varepsilon(u) & = 1. \end{split} \label{eq:3_system_linear_equations} \end{align}
Note that the system has in our case a unique solution over the rationals. Solving for $\varepsilon_{\cc{p.1}}(u)$ yields
\[\varepsilon_{\cc{p.1}}(u) = \frac{p(p - q(3-2t))}{(p-q)(p-2q)} \] and hence in the cases $t = 0$ and $t = 1$ this equals \[ \frac{p(p-3q)}{(p-q)(p-2q)} \qquad \text{and} \qquad \frac{p}{p-2q}, \] respectively, where the second possibility can only occur for $n = p + q - 1$. The first value is an integer only if $(p, q) \in \{(3, 2), (5, 3)\}$, cases we excluded at the beginning of the proof. If the second value is an integer, then $p - 2q \in \{\pm 1\}$. If $p -2q = 1$, then %(using that $n = p + q - 1$ in this case)
$n = 3q$, a contradiction. If $p - 2q = -1$, then there is a unique integral solution to the system \eqref{eq:3_system_linear_equations} of linear equations, namely $(\varepsilon_{\cc{q.1}}(u), \varepsilon_{\cc{q.2}}(u), \varepsilon_{\cc{p.1}}(u)) = (1, p, -p)$. If this happens, then we have $p = 2q - 1$ and $n = 3q - 2$. In this case we need to employ another character.  
Set $\tau = \chi_{(n-3,2,1)}$, then we can calculate with the Frobenius' formula \cite[4.10]{FultonHarris} \begin{align*} \tau(1) = & \frac{1}{3}n(n-2)(n-4) \\ \text{and} \quad \tau(\cc{r.j}) = & \frac{(n-rj)}{3}\Big( (n-rj-1)(n-rj-5) + 3 \Big) \end{align*} for $\cc{r.j} \in \{\cc{q.1}, \cc{q.2}, \cc{p.1} \}$.  Then we find \begin{align*} \mu_p(u, \tau) & = \frac{1}{pq}  \Big( \tau(1) - \tau(u^p) + (p-1)\tau(u^q) - (p-1)\tau(u) \Big)  \\ & = \frac{1}{(2q-1)q} \Big( \tau(1) - (2q-1) \tau(\cc{q.1}) - (2q-1)(2q-2) \tau(\cc{q.2}) + 2q(2q-2)\tau(\cc{p.1}) \Big)   \\ & =  -\frac{1}{3} q (q-4), \end{align*} which can only be non-negative if $q \leq 4$, hence $q = 3$ and $p = 7$. But this contradicts the assumptions on $p$ and $q$ in this case. This finishes the proof. \end{proof}

\begin{remark} \begin{itemize} 
\item In the case $\lfloor \frac{n}{q} \rfloor = 2$ there are clearly more possible partial augmentations for elements of order $q$, but only $(\varepsilon_{\cc{q.1}}(u^p), \varepsilon_{\cc{q.2}}(u^p)) \in \{ (2, -1), (1, 0)\}$ are possible when considering $P(u^p)$ for a normalized unit $u$ of order $pq$.
\item For the final contradiction in the case $(n, p, q) = (19, 13, 7)$ one might have used the character $\chi_{(n-3,3)}$ (which is of smaller degree than $\tau$), but that character does not yield a contradiction for bigger $n$.
\end{itemize} \end{remark}

Recall that the Prime Graph Question has an affirmative answer for $A_n$ if $n \leq 10$. As $\ZZ A_n$ is a subring of $\ZZ S_n$ and there is an element of order $pq$ in $A_n$ if and only if there is such an element in $S_n$ for two distinct odd primes $p$ and $q$ we obtain the following corollary.

\begin{corollary}\label{cor_A_n} Let $n \in \ZZ_{\geq 1}$ and $p, q$ two primes with $p, q > \frac{n}{3}$. Then there is an element of order $pq$ in $A_n$ if and only if there is an element of order $pq$ in $\V(\ZZ A_n)$. \end{corollary}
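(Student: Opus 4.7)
The plan is to deduce \cref{cor_A_n} from \cref{big_p_q} via the inclusion $\ZZ A_n \hookrightarrow \ZZ S_n$, together with a short parity observation. First I would dispose of small cases: the Prime Graph Question is already known to hold for $A_n$ with $n \leq 10$ (as recalled immediately before the corollary), so I may assume $n \geq 11$. Under this hypothesis $p, q > n/3$ forces both primes to be odd and $\geq 5$, and the diagonal case $p = q$ reduces to units of order $p^2$, handled by \cite[Corollary 4.1]{CL} (already cited in the proof of \cref{big_p_q}). So I may assume $p, q$ are distinct odd primes with $p, q \geq 5$.

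Next, given a normalized torsion unit $u \in \V(\ZZ A_n)$ of order $pq$, the ring embedding $\ZZ A_n \hookrightarrow \ZZ S_n$ sends $u$ to a normalized unit of the same order in $\V(\ZZ S_n)$. By \cref{big_p_q} there is then an element $g \in S_n$ of order $pq$. It remains to verify that $g$ lies in $A_n$. The hypothesis $p, q > n/3$ prevents the cycle type of $g$ from containing two $p$- or two $q$-cycles, and it also excludes a $pq$-cycle since $pq > n^2/9 \geq n$ for $n \geq 11$. Hence $g$ consists of exactly one $p$-cycle, one $q$-cycle, and fixed points; as both cycles have odd length, $g$ is an even permutation.

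I do not anticipate any substantive obstacle here. All the content is already packaged into \cref{big_p_q}, and the corollary reduces to a parity observation on top of it. The only mildly delicate step is ensuring that the element of order $pq$ produced in $S_n$ has a cycle structure made purely of odd cycles, and this is arranged transparently by the initial reduction to $n \geq 11$ and to distinct odd primes.
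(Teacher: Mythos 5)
Your proposal is correct and follows essentially the same route as the paper: the corollary is deduced from \cref{big_p_q} via the subring $\ZZ A_n \subseteq \ZZ S_n$, the known answer to (PQ) for $A_n$ with $n \leq 10$, and the parity observation that for distinct odd primes an element of order $pq$ in $S_n$ gives one in $A_n$. You merely spell out the case $p=q$ and the cycle-type analysis a bit more explicitly than the paper does.
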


\section{Units of Even Order}

Throughout this section $p$ will denote an odd prime. We will prove certain constraints for units of order $2p$ in $\V(\ZZ S_n)$. In the prime graph of $S_n$ the prime $2$ is connected to any other vertex except in the cases when $n \in \{p, p+1\}$; in this case $2$ and $p$ are not connected. Note that the conjugacy class of involutions $\cc{2.j}$ of $S_n$ is contained in $A_n$ if and only if $j$ is even.

\begin{definition} For a subset $X \subseteq S_n$ and an element $u = \sum_{g \in S_n} u_g g \in \ZZ S_n$ we denote by $\varepsilon_X(u) = \sum_{x \in X} u_x$ the \emph{generalized partial augmentation of $u$ with respect to the subset $X$}. \end{definition}

\begin{lemma}\label{prop:augonAp} Let $n \in \ZZ_{\geq 1}$, $p$ be an odd prime and $u \in \V(\ZZ S_n)$ be of order $2p$. Then $\varepsilon_{A_n}(u) = \varepsilon_{A_n}(u^p) \in \{0, 1\}$. \end{lemma}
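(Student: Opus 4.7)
The plan is to exploit the sign character $\sgn\colon S_n\to\{\pm 1\}$, extended $\ZZ$-linearly to the augmentation-preserving ring homomorphism $\sgn\colon \ZZ S_n\to\ZZ$, and to relate $\varepsilon_{A_n}$ to $\varepsilon$ and $\sgn$ in one stroke.

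First I would observe the basic identity: for any $u=\sum_{g\in S_n} u_g\, g \in \ZZ S_n$, splitting the sum over $A_n$ and its complement gives
\[
\varepsilon(u)+\sgn(u) \;=\; \sum_{g\in A_n} u_g - \sum_{g\notin A_n} u_g + \sum_{g\in S_n} u_g \;=\; 2\,\varepsilon_{A_n}(u).
\]
Since $u\in\V(\ZZ S_n)$ has augmentation $1$, this specializes to $\varepsilon_{A_n}(u)=\tfrac{1+\sgn(u)}{2}$, and likewise for any power of $u$ (which again is a normalized unit).

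Next I would argue that $\sgn(u)\in\{\pm 1\}$. As $\sgn$ is a one-dimensional representation and $u$ is a torsion unit of order $2p$, the scalar $\sgn(u)$ is a $(2p)$-th root of unity in $\ZZ$, hence equal to $\pm 1$. Combined with the identity above, this forces $\varepsilon_{A_n}(u)\in\{0,1\}$ (and similarly $\varepsilon_{A_n}(u^p)\in\{0,1\}$), taking care of the membership clause.

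Finally, for the equality $\varepsilon_{A_n}(u)=\varepsilon_{A_n}(u^p)$, multiplicativity of $\sgn$ and the fact that $p$ is odd yield $\sgn(u^p)=\sgn(u)^p=\sgn(u)$ (since $\sgn(u)=\pm 1$). Applying the identity from the first step to both $u$ and $u^p$ gives
\[
\varepsilon_{A_n}(u^p)=\frac{1+\sgn(u^p)}{2}=\frac{1+\sgn(u)}{2}=\varepsilon_{A_n}(u),
\]
completing the proof. There is no real obstacle here: the only subtle point is ensuring that $\sgn(u)$ is genuinely a root of unity in $\ZZ$, which is immediate from viewing $\sgn$ as a $1$-dimensional representation and noting that $u^{2p}=1$ implies $\sgn(u)^{2p}=1$.
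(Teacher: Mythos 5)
Your proof is correct and is essentially the paper's argument in a different guise: the paper projects $u$ onto $\ZZ[S_n/A_n]\simeq \ZZ C_2$ and invokes the triviality of torsion units there, whereas you compose further with the sign character and use that the only roots of unity in $\ZZ$ are $\pm 1$ --- your identity $2\varepsilon_{A_n}(u)=\varepsilon(u)+\sgn(u)$ is exactly the character-theoretic decomposition of that projection. Both arguments then finish the same way, using that $p$ is odd to get $\sgn(u^p)=\sgn(u)^p=\sgn(u)$ (resp.\ $f(u^p)=f(u)^p$).
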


\begin{proof} Let $S_n/A_n = \langle t \rangle \simeq C_2$. Consider the natural ring epimorphism $f \colon \ZZ S_n \to \ZZ S_n/A_n \simeq \ZZ C_2$. It maps an element $v \in \ZZ S_n$ to $\varepsilon_{A_n}(v)  + \varepsilon_{S_n \setminus A_n}(v)t$. As the ring $\ZZ C_2$ only has trivial (torsion) units we see that one of the values $\varepsilon_{A_n}(u)$ and $\varepsilon_{S_n \setminus A_n}(u)$ is $1$ and the other one is $0$. Furthermore, for every odd $m \in \ZZ_{\geq 1}$ \begin{align*} \varepsilon_{A_n}(u^m)  + \varepsilon_{S_n \setminus A_n}(u^m)t & = f(u^m) = (f(u))^m = \sum_{j = 0}^m {m \choose j} \varepsilon_{A_n}(u)^{j}\varepsilon_{S_n \setminus A_n}(u)^{m-j}t^{m-j} \\ & = \varepsilon_{A_n}(u)^m  + \varepsilon_{S_n \setminus A_n}(u)^m t. \end{align*} The claim follows.
\end{proof}

For the proof of the next lemma we will use two characters, which are stated for the convenience of the reader in the following table ($j \in \{1, ..., \lfloor \frac{n}{2} \rfloor \}$):

\begin{center}
 \noindent\begin{tabular}{cccc} \toprule
        & $1$ & $\cc{2.j}$ & $\cc{p.1}$ \\ \midrule \\[-.38cm]
   $\pi = \chi_{(n-1,1)} $ & $n -1 $ & $\pi(1) - 2j$  & $\pi(1) - p$ \\
   $\pi\otimes \sgn = \chi_{(2, 1^{n-2})}$ & $n -1 $ & $(-1)^j(\pi(1) - 2j)$  & $\pi(1) - p$ \\
\bottomrule
 \end{tabular}
\end{center}

\begin{lemma}\label{prop:sumzero2} Let $p$ be an odd prime and $u \in \V(\ZZ S_p)$ be of order $2p$. Then $\mu_1(u, \pi) = 0$ and $\sum_{j = 1}^{\lfloor \frac{p}{2} \rfloor} j \varepsilon_{\cc{2.j}} (u^p) = 0$. More precisely, 
\[\sum_{\substack{j = 1 \\ 2 \nmid j}}^{\lfloor \frac{p}{2} \rfloor} j \varepsilon_{\cc{2.j}} (u^p) = 0 \quad \text{ and } \quad \sum_{\substack{j = 1 \\ 2 \mid j}}^{\lfloor \frac{p}{2} \rfloor} j \varepsilon_{\cc{2.j}} (u^p) = 0. \] \end{lemma}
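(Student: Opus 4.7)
The plan is to combine a diagonalization of $P(u)$ (where $P$ is an integral representation affording $\pi$) with Luthar-Passi applied to $\pi$, $\pi \otimes \sgn$, and one further character, so as to rule out the exotic possibility that $P(u)$ has primitive $2p$-th root of unity eigenvalues. First I diagonalize $P(u) \in \GL_{p-1}(\CC)$: its eigenvalues are $2p$-th roots of unity distributed over four Galois orbits $\{1\}$, $\{-1\}$, $\{\zeta_p^k\}_{k=1}^{p-1}$, $\{-\zeta_p^k\}_{k=1}^{p-1}$, with multiplicities $m_1, m_{-1}, m_p, m_{2p}$. Because $u^2$ has order $p$ and $\cc{p.1}$ is the unique class of $p$-elements in $S_p$, $u^2$ is rationally conjugate to a $p$-cycle, whose $\pi$-eigenvalues are exactly the nontrivial $p$-th roots of unity, each with multiplicity one. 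Squaring the eigenvalues of $P(u)$ and matching forces $m_1 = m_{-1} = 0$ and $m_p + m_{2p} = 1$, so $\mu_1(u,\pi) = m_{2p} \in \{0,1\}$ and the first assertion reduces to $m_{2p} = 0$.

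Introduce the shorthands $A = \sum_{j \text{ odd}} j\,\varepsilon_{\cc{2.j}}(u^p)$, $B = \sum_{j \text{ even}} j\,\varepsilon_{\cc{2.j}}(u^p)$, and analogously $C, D$ for $u$. Applying \cref{prop:mu1pi}\eqref{prop:mu1rpi} to $u^p$ (which has order $2$) under $\pi$ yields $A + B = (p-1)m_{2p}$, and an analogous Luthar-Passi computation for $\pi \otimes \sgn$ at $u^p$, combined with $(P \otimes \sgn)(u^p) = \sgn(u)P(u^p)$ and \cref{prop:augonAp} (which gives $\sgn(u) \in \{\pm 1\}$), yields $B - A = \sgn(u)(p-1)m_{2p}$. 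Evaluating $\mu_1(u,\pi) - \mu_1(u,\pi\otimes\sgn)$ directly from Luthar-Passi on $u$ itself, and invoking $\varepsilon_{A_p}(u) = \varepsilon_{A_p}(u^p)$ from \cref{prop:augonAp} to cancel the $u^2$-contribution, collapses the difference to $2(A - C)/p$. If $\sgn(u) = -1$ the left-hand side equals $m_{2p} - m_p = \pm 1$, forcing $2(A - C) = \pm p$, impossible since $p$ is odd. Hence $\sgn(u) = 1$, $A = C = 0$, and $B = (p-1)m_{2p}$.

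It remains to show $m_{2p} = 0$, equivalently $B = 0$, and this is the main obstacle: once $\sgn(u) = 1$, $(P \otimes \sgn)(u) = P(u)$, so $\pi$ and $\pi \otimes \sgn$ yield identical Luthar-Passi constraints and cannot distinguish $m_p = 1$ from $m_{2p} = 1$. To break the symmetry I would invoke a third character, most naturally $\chi := \chi_{(p-2,2)}$, via its expression $\chi = \operatorname{Sym}^2 \pi - \pi - \mathbf{1}$. Using $\pi(u) = 2m_{2p} - 1$ (so $\pi(u)^2 = 1$), $\pi(u^2) = -1$ and $\pi(u^p) = (p-1)(1 - 2m_{2p})$, one computes $\chi(u) = -2m_{2p}$, $\chi(u^2) = 0$, $\chi(u^p) = \tfrac{1}{2}(p-1)(p-2) + 2(p-1)m_{2p} - 1$ and $\chi(1) = \tfrac{1}{2}p(p-3)$. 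Substituting into \cref{LuPa}, the constant terms telescope and one is left with $\mu_1(u,\chi) = -m_{2p}$. Non-negativity of this multiplicity forces $m_{2p} = 0$, whence $B = 0$ and $\mu_1(u,\pi) = 0$, and both refined sum identities follow at once.
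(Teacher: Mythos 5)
Your opening step is sound and matches the paper: diagonalizing $P(u)$ over the Galois orbits of $2p$-th roots of unity, squaring, and comparing with $P(u^2)$ (rationally a $p$-cycle) correctly gives $m_1=m_{-1}=0$ and $m_p+m_{2p}=1$, and the identity $A+B=(p-1)m_{2p}$ is right. Your computation $\mu_1(u,\pi)-\mu_1(u,\pi\otimes\sgn)=2(A-C)/p$ via \cref{prop:augonAp} is also correct and rather nice. But the argument breaks at the two places where you evaluate a tensor or symmetric power of $P$ at a group-ring element as if that commuted with the construction. The linear extension of $P\otimes\sgn$ to $\ZZ S_p$ sends $u=\sum_g u_g g$ to $\sum_g u_g\sgn(g)P(g)$, which is \emph{not} $\sgn(u)P(u)=\bigl(\sum_g u_g\sgn(g)\bigr)\bigl(\sum_g u_g P(g)\bigr)$: the two maps agree on group elements but the second is not linear in $u$, so they need not agree on units. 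Hence ``$(P\otimes\sgn)(u^p)=\sgn(u)P(u^p)$'' is unjustified, and with it the identity $B-A=\sgn(u)(p-1)m_{2p}$ and your elimination of the case $\sgn(u)=-1$. (What your $2(A-C)/p$ computation legitimately yields, combined with $\mu_1(u,\pi\otimes\sgn)\in\{0,1\}$ and $p$ odd, is $\mu_1(u,\pi\otimes\sgn)=m_{2p}$ and $A=C$ --- useful, but it does not decide $m_{2p}$.)

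The same flaw is fatal in your last step: $(\operatorname{Sym}^2P)(u)\neq\operatorname{Sym}^2(P(u))$ for a unit $u$, so you may not compute $\chi_{(p-2,2)}(u)$ as $\tfrac12\bigl(\pi(u)^2+\pi(u^2)\bigr)-\pi(u)-1$. The honest value is $\sum_C\varepsilon_C(u)\chi_{(p-2,2)}(C)$, a linear function of the unknown partial augmentations, and $\mu_1(u,\chi_{(p-2,2)})$ then gives just another HeLP-type inequality rather than the clean value $-m_{2p}$; the ``telescoping'' is an artifact of the invalid shortcut. This is precisely where the paper injects input of a different nature, which your proof lacks: from the $p$-modular decomposition matrix of $S_p$, the reduction of $\pi$ (resp.\ $\pi\otimes\sgn$) modulo $p$ has the trivial character (resp.\ $\sgn$) as a constituent, and since $u^p$ is $p$-regular, the bound of \cite[Proposition 3.1 (i)]{HertweckBrauer} gives $\pi(u^p)\geq-(p-3)$, ruling out $P(u^p)=-I$ and hence forcing $m_{2p}=0$; the analogous argument for $\pi\otimes\sgn$ ties the sign of $R(u)$ to $\varepsilon_{A_p}(u^p)$ and yields the refined odd/even sum identities. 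Some such modular argument appears to be unavoidable here, and your proposal does not supply a valid substitute for it.
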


\begin{proof} As there is only one conjugacy class, $\cc{p.1}$, of elements of order $p$ in $S_p$, we know that $u^2$ is rationally conjugate to an element of that class. Hence $P(u^2) \sim \text{diag}\left( \eigbox{1}{\zeta_p, ..., \zeta_p^{p-1} }  \right)$. As $\pi(u) \in \ZZ$ we have \[ P(u) \sim \text{diag}\left( \eigbox{1}{\zeta_p, ..., \zeta_p^{p-1} }  \right) \quad \text{or} \quad P(u) \sim \text{diag}\left( \eigbox{1}{-\zeta_p, ..., -\zeta_p^{p-1} }  \right).\] From the decomposition matrix for the prime $p$ \cite[24.1 Theorem (ii)]{James} we see that $\pi_p$, the restriction of $\pi$ to a $p$-modular Brauer character, has a trivial constituent. As  the order of $u^p$ is not divisible by $p$ (i.e.\  $u^p$ is $p$-regular) we get $\pi(u^p) \geq -(p - 3)$ (see e.g.\ \cite[Proposition 3.1 (i)]{HertweckBrauer}), ruling out the second option for $P(u)$. Thus $\mu_1(u, \pi) = 0$ and $u^p \in \Ker(P)$. The multiplicity of $-1$ as an eigenvalue of $P(u^p)$ was calculated in \cref{prop:mu1pi}\eqref{prop:mu1rpi} to be $\mu_1(u^p, \pi) = \sum_{j = 1}^{\lfloor \frac{n}{2} \rfloor} j \varepsilon_{\cc{2.j}}(u^p)$, using that $P(u^p)$ is the identity, we have $\sum_{j = 1}^{\lfloor \frac{p}{2} \rfloor} j \varepsilon_{\cc{2.j}} (u^p) = 0$. 

Now consider a representation $R$ affording the character $\pi \otimes \sgn = \chi_{(2, 1^{n-2})}$. With the same reasoning as above we get  \[ R(u) \sim \text{diag}\left( \eigbox{1}{\zeta_p, ..., \zeta_p^{p-1} }  \right) \quad \text{or} \quad R(u) \sim \text{diag}\left( \eigbox{1}{-\zeta_p, ..., -\zeta_p^{p-1} }  \right).\] The decomposition matrix for the prime $p$ implies that the restriction of $\pi \otimes \sgn$ to a Brauer character for the prime $p$ contains $\sgn$ as a constituent. Note that $\sgn(u^p)$ only depends on the value $\varepsilon_{A_p}(u) = \varepsilon_{A_p}(u^p) \in \{0, 1\}$; we get that $R(u)$ has the first form given above, if $\varepsilon_{A_p}(u^p) = 1$ and it has the second form given above if $\varepsilon_{A_p}(u^p) = 0$. Hence $\mu_1(u^p, \pi \otimes \sgn)$ equals $0$ if $\varepsilon_{A_p}(u^p) = 1$ and it equals $p-1$ if $\varepsilon_{A_p}(u^p) = 0$. 

For the multiplicity of $-1$ as an eigenvalue of $R(u^p)$ we get from the Luthar-Passi-formula (\cref{LuPa}) and the fact that $\sum_{j = 1}^{\lfloor \frac{p}{2} \rfloor} j \varepsilon_{\cc{2.j}} (u^p) = 0$ \begin{align*} \mu_1(u^p,\pi \otimes \sgn) & = \frac{1}{2} \left[ \pi(1) -  \sum_{j = 1}^{\lfloor \frac{p}{2} \rfloor} (-1)^j(\pi(1) - 2j) \varepsilon_{\cc{2.j}}(u^p) \right] \\ & =   \frac{1}{2} \left[ \sum_{j = 1}^{\lfloor \frac{p}{2} \rfloor} (\pi(1) - 2j) \varepsilon_{\cc{2.j}}(u^p)  -  \sum_{j = 1}^{\lfloor \frac{p}{2} \rfloor} (-1)^j(\pi(1) - 2j) \varepsilon_{\cc{2.j}}(u^p) \right] \\ & =  \sum_{\substack{j = 1 \\ 2 \nmid j}}^{\lfloor \frac{p}{2} \rfloor} (\pi(1) - 2j) \varepsilon_{\cc{2.j}}(u^p) \\ &  =  \pi(1)\varepsilon_{S_p \setminus A_p}(u^p) - 2\sum_{\substack{j = 1 \\ 2 \nmid j}}^{\lfloor \frac{p}{2} \rfloor} j \varepsilon_{\cc{2.j}}(u^p). \end{align*} Plugging in both possibilities for $\varepsilon_{A_p}(u^p)$ and $\mu_1(u^p,\pi \otimes \sgn)$ we get the last two statements of the lemma. \end{proof}

The argument that a certain representation decomposes modulo $p$ was also used in \cite[Example 3.6]{Hert_AlgColloq}.

\section{Applications}

In this section we use \cref{big_p_q,cor_A_n,prop:mu1is0,prop:sumzero2} to verify the Prime Graph Question for certain almost simple groups. Additional calculations were made using the \textsf{HeLP} package \cite{HeLP,HeLP_article} implemented in \textsf{GAP} \cite{GAP}.

\begin{theorem}\label{prop:PQ_for_small_asg} The Prime Graph Question has an affirmative answer for all almost simple groups with socle an alternating group of degree at most $17$. 
\end{theorem}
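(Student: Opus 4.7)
The plan is to combine the main theorem \cref{big_p_q}, its corollary \cref{cor_A_n}, and the even-order lemmas \cref{prop:augonAp,prop:sumzero2} with explicit HeLP computations. The Kimmerle-Konovalov reduction together with the existing literature already gives (PQ) for all almost simple groups with socle $A_n$ for $n \le 6$ and for $A_n$ itself for $7 \le n \le 10$; what remains is to settle (PQ) for $S_n$ with $7 \le n \le 10$ and for both $A_n$ and $S_n$ with $11 \le n \le 17$. For each such $G$, \cref{big_p_q} and \cref{cor_A_n} eliminate every prime pair $(p,q)$ with $p, q > n/3$, so the surviving critical pairs all involve a prime in $\{2, 3, 5\}$ and form a short explicit list.

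For each critical pair $(p,q)$ such that $G$ has no element of order $pq$, the approach is to enumerate with HeLP all tuples of partial augmentations of $u, u^p, u^q$ compatible with $\varepsilon(u) = 1$ and the vanishing conditions of \cite[Theorem 2.3]{HertweckBrauer}, and then to discard any tuple that forces a non-integral or negative multiplicity $\mu_\ell(u,\chi)$ in \cref{LuPa} for some suitable character $\chi$. The character candidates I would supply are $\pi$, its sign-twist $\pi \otimes \sgn$, the character $\rho = \chi_{(n-2,1,1)}$ used in the proof of \cref{big_p_q}, $\chi_{(n-3,3)}$, and more generally the hook and near-hook characters $\chi_{(n-k,1^k)}$ and $\chi_{(n-k,k)}$ whose values on the classes $\cc{r.j}$ are computable via the Murnaghan-Nakayama rule.

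For the odd-order critical pairs $(3,q)$ and $(5,q)$ the analysis follows the pattern of \cref{ex:order_3_5} and of the proof of \cref{big_p_q}: the tuple space is small and a single hook character usually suffices. For the even-order critical pairs $(2,q)$ the difficulty is real, because no analogue of \cref{big_p_q} is available; the problematic ranges are $n \in \{q, q+1\}$ when $G = S_n$ and $n \in \{q, q+1, q+2, q+3\}$ when $G = A_n$, being precisely those $n$ for which the corresponding element is missing from $G$. Here I would first apply \cref{prop:augonAp} to fix $\varepsilon_{A_n}(u) = \varepsilon_{A_n}(u^q) \in \{0,1\}$, and, when $n = q$, \cref{prop:sumzero2} to split the vanishing sum of the involution partial augmentations of $u^q$ into independent even- and odd-support summands. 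These structural constraints sharply prune the tuple space before any HeLP enumeration is attempted. For $G = A_n$ one has the additional bookkeeping that certain classes $\cc{r.j}$ split in $A_n$ and the $A_n$-character table must be used.

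The main obstacle I expect is computational rather than conceptual: at the upper end of the range the surviving tuple space after imposing only the a priori constraints can be sizeable, and ordinary Luthar-Passi tests alone may fail to finish the elimination. In that event, as prefigured in the proof of \cref{prop:sumzero2}, one falls back on $r$-modular Brauer character tests for small $r$ (primarily $r \in \{2,3\}$) and on the Brauer-character inequality from \cite[Proposition 3.1]{HertweckBrauer}; both are supported natively by HeLP. Once every surviving tuple has been discarded for every missing order and every $G$ in the list, (PQ) is verified for all almost simple groups in the theorem.
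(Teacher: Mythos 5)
Your proposal follows essentially the same route as the paper: reduce to the largest symmetric or alternating group missing an element of order $pq$, eliminate all pairs with both primes exceeding $n/3$ via \cref{big_p_q,cor_A_n}, prune the remaining partial-augmentation tuples with \cref{prop:mu1is0,prop:augonAp,prop:sumzero2}, and finish with HeLP computations. The only difference is cosmetic — the paper works exclusively with Brauer characters (for smaller coefficients and stronger constraints) rather than starting from ordinary hook characters — and your Brauer-character fallback covers exactly what the paper actually does.
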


\begin{proof} The Prime Graph Question has an affirmative answer for alternating groups of degree at most $10$ \cite{SalimA10} and all almost simple groups with socle isomorphic to $A_n$ for $n \leq 6$ \cite{KiKoStAndrews,BaMaM10}. We only have to exclude the existence of a normalized unit of order $pq$ in the integral group ring of the largest symmetric (or alternating) group which does not contain an element of order $pq$. After applying \cref{big_p_q,cor_A_n} we are left with the cases of units of order $pq$ in $\V(\ZZ S_m)$ or $\V(\ZZ A_m)$ for triples $(m, p, q)$ given in \cref{infoSm,infoAm}. We handled these cases with \textsf{GAP} \cite{GAP} and the \textsf{HeLP} package \cite{HeLP,HeLP_article} which uses the software package \textsf{4ti2} \cite{4ti2}.

In this proof only Brauer characters will be used as they are known for all groups involved and they provide us with systems with smaller coefficients, which can be solved more efficiently and which provide stronger constraints. For a prime $r$ the Brauer table of $S_m$ can be obtained in \textsf{GAP} \cite{GAP} by \texttt{CharacterTable("Sm") mod r}. By $\varphi^r_s$ we denote the character number $s$ in that Brauer table.

Let $p$ and $q$ be two primes, $p > q$. Let $G$ be an alternating or a symmetric group of degree $m$ having one conjugacy class of elements of order $p$ and no element of order $pq$. Assume there is a unit $u \in \V(\ZZ G)$ of order $pq$. Let $\psi$ be a Brauer character of $G$ modulo a prime $r$ (different from $p$ and $q$) which is rational valued and let $D$ be a representation affording this character. For Brauer characters there is an analogue of the formula in \cref{LuPa}, see \cite[Section 4]{HertweckBrauer}. Let first $\xi$ be a primitive $q$-th root of unity. Then the formula for the multiplicity $\mu_\ell(u^p, \psi)$ of $\xi^\ell$ as an eigenvalue of $D(u^p)$ is

\begin{equation}\label{eq:mu_up} \mu_\ell(u^p, \psi) = \sum_{j = 1}^{\lfloor \frac{m}{q} \rfloor} a_{\tt{q.j}} \varepsilon_{\tt{q.j}}(u) + b \end{equation}
with $b = \nicefrac{\psi(1)}{q}$ and 
 \[a_{\tt{q.j}} = \begin{cases} \nicefrac{(q-1)\psi(\tt{q.j})}{q} & \text{ if } q \mid \ell \\ 
 - \nicefrac{\psi(\tt{q.j})}{q} & \text{ if } q \nmid \ell.
 \end{cases}
\]

Now assume that $\zeta$ is a primitive $pq$-th root of unity. Then the formula for the multiplicity of $\zeta^\ell$ as an eigenvalue of $D(u)$ is as follows:
\begin{equation}\label{eq:mu_u} \mu_\ell(u, \psi) = \sum_{j = 1}^{\lfloor \frac{m}{q} \rfloor} a_{\tt{q.j}} \varepsilon_{\tt{q.j}}(u) + a_{\tt{p.1}} \varepsilon_{\tt{p.1}}(u) + b \end{equation}
with 
\begin{equation*}
 {\footnotesize
a_{\tt{x.j}} = \begin{cases} \nicefrac{(p-1)(q-1)\psi(\tt{x.j})}{pq} & \text{ if }  pq \mid \ell \\ 
 -\nicefrac{(q-1)\psi(\tt{x.j})}{pq} & \text{ if } q \mid \ell, p\nmid \ell \\
 -\nicefrac{(p-1)\psi(\tt{x.j})}{pq} & \text{ if } p \mid \ell, q\nmid \ell \\
 \nicefrac{\psi(\tt{x.j})}{pq} & \text{ if } (pq, \ell) = 1,
 \end{cases} \qquad
 b = \begin{cases}\frac{1}{pq} (\psi(1) + (p-1)\psi(\texttt{p.1}) + (q-1)\psi(u^p )& \text{ if }  pq \mid \ell \\
 \frac{1}{pq} (\psi(1)- \psi(\texttt{p.1}) + (q-1)\psi(u^p ) & \text{ if } q \mid \ell, p\nmid \ell \\
 \frac{1}{pq} (\psi(1) + (p-1)\psi(\texttt{p.1}) -\psi(u^p ) & \text{ if } p \mid \ell, q\nmid \ell \\
 \frac{1}{pq} (\psi(1)-\psi(\texttt{p.1}) - \psi(u^p ) & \text{ if } (pq, \ell) = 1,
 \end{cases} 
 }
 \end{equation*}
for $\tt{x} \in \{\tt{p}, \tt{q}\}$.

The strategy is now as follows: we will exploit that $\mu_\ell(u^p, \psi)$ has to be a non-negative integer for all characters $\psi$ and all $\ell$ to produce a finite list of possible partial augmentations for elements $u^p$. Then we will apply \cref{prop:mu1is0,prop:sumzero2} (where applicable) to reduce the number of cases which might occur. Afterwards we use that $\mu_\ell(u, \psi)$ has to be a non-negative integer for all characters $\psi$ and all $\ell$ to disprove the existence of the unit $u$.

We give details for one case, the non-existence of units of order $3\cdot 11$ in $\V(\ZZ S_{13})$. The first table contains the coefficients for the formula in \eqref{eq:mu_up} to obtain a finite number of possible partial augmentations for units of order $3$, in this case 141. After applying \cref{prop:mu1is0} we see that only 18 of them might be $11$-th powers of a unit of order $3\cdot 11$. For those remaining possibilities a system of inequalities $\mu_\ell(u, \psi) \in \ZZ_{\geq 0}$ given by the formulas in \eqref{eq:mu_u} is provided which does not admit an integral solution. Several cases are grouped together as the multiplicities of the same eigenvalues under the same representations can be used. As the coefficients $a_{\tt{x.j}}$ do not depend on the element $u^p$, those coefficients are only given once for each group. The constant term $b = b(\psi, \ell)$ does depend on the partial augmentations $\underline{\varepsilon}(u^{11}) = (\varepsilon_{\cc{3.1}} (u^{11}), \varepsilon_{\cc{3.2}} (u^{11}), \varepsilon_{\cc{3.3}} (u^{11}), \varepsilon_{\cc{3.4}} (u^{11}))$, so for every pair $(\psi, \ell)$ which is used, the absolute coefficient $b(\psi, \ell)$ is given in the next table for every partial augmentation of $u^{11}$ in this group.

\vspace{.5cm}

{\footnotesize
$o(u) = 3$ \\
\begin{tabular}{cccccc}\toprule
  $(\psi, \ell)$ & $a_{\tt{3.1}}$ & $a_{\tt{3.2}}$ & $a_{\tt{3.3}}$ & $a_{\tt{3.4}}$  & $b$ \\ \midrule
   $(\varphi_3^2, 0)$ & $-\nicefrac{64}{3}$ & $\nicefrac{32}{3}$ & $-\nicefrac{16}{3}$ & $\nicefrac{8}{3}$ & $\nicefrac{64}{3}$ \\
   $(\varphi_3^2, 1)$ & $\nicefrac{32}{3}$ & $-\nicefrac{16}{3}$ & $\nicefrac{8}{3}$ & $-\nicefrac{4}{3}$ & $\nicefrac{64}{3}$  \\
   $(\varphi_4^2, 0)$ & $\nicefrac{68}{3}$ & $\nicefrac{26}{3}$ & $\nicefrac{2}{3}$ & $-\nicefrac{4}{3}$ & $\nicefrac{64}{3}$  \\
   $(\varphi_4^2, 1)$ & $-\nicefrac{34}{3}$ & $-\nicefrac{13}{3}$ & $-\nicefrac{1}{3}$ & $\nicefrac{2}{3}$ & $\nicefrac{64}{3}$  \\
   $(\varphi_5^2, 0)$ & $-64$ & $16$ & $0$ & $-4$ & $96$  \\
   $(\varphi_6^2, 0)$ & $\nicefrac{152}{3}$ & $\nicefrac{32}{3}$ & $\nicefrac{2}{3}$ & $\nicefrac{8}{3}$ & $\nicefrac{208}{3}$ \\ \bottomrule
\end{tabular} \\
Number of solutions: $141$, number of relevant solutions after applying \cref{prop:mu1is0}: $18$

\vspace{.3cm}

$o(u) = 3 \cdot 11$\\
Group 1 (12 possible partial augmentations for $u^{11}$):\\
\begin{minipage}{.5\textwidth}
\centering
\begin{tabular}{cccccc}\toprule
  $(\psi, \ell)$ & $a_{\tt{3.1}}$ & $a_{\tt{3.2}}$ & $a_{\tt{3.3}}$ & $a_{\tt{3.4}}$ & $a_{\tt{11.1}}$  \\ \midrule
  $(\varphi_3^2, 0)$ & $-\nicefrac{640}{33}$ & $\nicefrac{320}{33}$ & $-\nicefrac{160}{33}$ & $\nicefrac{80}{33}$ &  $-\nicefrac{40}{33}$ \\
  $(\varphi_3^2, 11)$ & $\nicefrac{320}{33}$ &  $-\nicefrac{160}{33}$ &  $\nicefrac{80}{33}$ & $ -\nicefrac{40}{33}$ & $\nicefrac{20}{33}$ \\ \bottomrule
\end{tabular}
\vspace*{.5cm} 

\begin{tabular}{ccc}\toprule
  $\underline{\varepsilon}(u^{11})$ & $b{(\varphi_3^2, 0)}$ & $b{(\varphi_3^2, 11)}$ \\ \midrule
  $(-1, 0, 6, -4)$ &  $-\nicefrac{20}{33}$ & $\nicefrac{76}{33}$ \\
  $( 0, 0, 3, -2)$ & $-\nicefrac{20}{33}$ & $\nicefrac{76}{33}$ \\
  $( 1, 0, 0, 0  )$ & $-\nicefrac{20}{33}$ & $\nicefrac{76}{33}$ \\ \midrule
  \multicolumn{3}{c}{Continued} \\ \midrule
 \end{tabular}
 \end{minipage}
  \hspace{1cm}
 \begin{tabular}{ccc}\midrule
  $\underline{\varepsilon}(u^{11})$ & $b{(\varphi_3^2, 0)}$ & $b{(\varphi_3^2, 11)}$ \\ \midrule 
  $(-1, 1, 5, -4 )$ & $\nicefrac{28}{33}$ & $\nicefrac{52}{33}$ \\
  $( 0, 1, 2, -2)$ &  $\nicefrac{28}{33}$ & $\nicefrac{52}{33}$ \\
  $( 1, 1, -1, 0)$ &  $\nicefrac{28}{33}$ & $\nicefrac{52}{33}$ \\
  $( -1, 2, 3, -3)$ & $\nicefrac{100}{33}$ & $\nicefrac{16}{33}$ \\
  $( 0, 2, 0, -1 )$ &  $\nicefrac{100}{33}$ & $\nicefrac{16}{33}$ \\
  $( 1, 2, -3, 1)$ &  $\nicefrac{100}{33}$ & $\nicefrac{16}{33}$ \\
  $( -1, 2, 2, -2)$ & $\nicefrac{124}{33}$ & $\nicefrac{4}{33}$ \\
  $( 0, 2, -1, 0)$ &  $\nicefrac{124}{33}$ & $\nicefrac{4}{33}$\\
  $( 1, 2, -4, 2)$ &  $\nicefrac{124}{33}$ & $\nicefrac{4}{33}$ \\ \bottomrule
\end{tabular} \\[.3cm]

Group 2 (5 possible partial augmentations for $u^{11}$):\\
\begin{tabular}{cccccc}\toprule
  $(\psi, \ell)$ & $a_{\tt{3.1}}$ & $a_{\tt{3.2}}$ & $a_{\tt{3.3}}$ & $a_{\tt{3.4}}$ & $a_{\tt{11.1}}$  \\ \midrule
  $(\varphi_4^2, 0)$ & $\nicefrac{680}{33}$ & $\nicefrac{260}{33}$ & $\nicefrac{20}{33}$ & $-\nicefrac{40}{33}$ &  $-\nicefrac{40}{33}$ \\  
  $(\varphi_4^2, 11)$ & $-\nicefrac{340}{33}$ &  $-\nicefrac{130}{33}$ &  $-\nicefrac{10}{33}$ & $ \nicefrac{20}{33}$ & $\nicefrac{20}{33}$ \\ \bottomrule 
\end{tabular}
\hspace{1cm}
\begin{tabular}{ccc}\toprule
  $\underline{\varepsilon}(u^{11})$ & $b{(\varphi_4^2, 0)}$ & $b{(\varphi_4^2, 11)}$ \\ \midrule
  $(-1, 1, 4, -3)$ &  $\nicefrac{2}{3}$ & $\nicefrac{5}{3}$ \\
  $(  0, 1, 1, -1)$ & $\nicefrac{76}{33}$ & $\nicefrac{28}{33}$ \\  
  $( 1, 1, -2, 1 )$ & $-\nicefrac{130}{33}$ & $\nicefrac{1}{33}$ \\
  $(-1, 3, 1, -2)$ &  $\nicefrac{64}{33}$ & $\nicefrac{34}{33}$ \\
  $( 0, 3, -2, 0  )$ & $\nicefrac{118}{33}$ & $\nicefrac{7}{33}$ \\ \bottomrule
\end{tabular} \\[.3cm]

Group 3 (1 possible partial augmentation for $u^{11}$):\\
\begin{tabular}{cccccc}\toprule
  $(\psi, \ell)$ & $a_{\tt{3.1}}$ & $a_{\tt{3.2}}$ & $a_{\tt{3.3}}$ & $a_{\tt{3.4}}$ & $a_{\tt{11.1}}$  \\ \midrule
  $(\varphi_2^2, 0)$ & $\nicefrac{60}{11}$ & $\nicefrac{40}{11}$ & $\nicefrac{20}{11}$ & $0$ &  $\nicefrac{20}{33}$ \\  
  $(\varphi_2^2, 11)$ & $-\nicefrac{30}{11}$ &  $-\nicefrac{20}{11}$ &  $-\nicefrac{10}{11}$ & $ 0$ & $-\nicefrac{10}{33}$ \\ 
  $(\varphi_2^5, 0)$ & $-\nicefrac{640}{11}$ & $\nicefrac{160}{11}$ & $0$ & $-\nicefrac{40}{11}$ &  $\nicefrac{40}{33}$ \\ 
  $(\varphi_2^5, 11)$ & $\nicefrac{320}{11}$ &  $-\nicefrac{80}{11}$ &  $0$ & $ \nicefrac{20}{11}$ & $-\nicefrac{20}{33}$  \\ \bottomrule 
\end{tabular}
\hspace{1cm}
\begin{tabular}{ccc}
  \toprule
  $\underline{\varepsilon}(u^{11})$ & $b{(\varphi_2^2, 0)}$ & $b{(\varphi_2^2, 11)}$  \\ \midrule
  $( 1, 3, -5, 2 )$ &  $\nicefrac{46}{33}$ & $\nicefrac{10}{3}$  \\ \cmidrule[1pt]{2-3} & $b{(\varphi_2^5, 0)}$ & $b{(\varphi_2^5, 11)}$ \\ \cmidrule{2-3} & $\nicefrac{236}{33}$ & $\nicefrac{344}{33}$ \\ \bottomrule
\end{tabular}
}

\vspace{.5cm}

For the other cases which are needed to complete the proof of the theorem the information is given in \cref{infoSm,infoAm}. In each row the existence of a unit of order $pq$ in the integral group ring of a symmetric group $S_m$ or an alternating group $A_m$ is disproved. To obtain a finite number of solutions for elements of order $q$, the constraints $\mu_0(u^p, \psi) \in \ZZ_{\geq 0}$ and $\mu_1(u^p, \psi) \in \ZZ_{\geq 0}$ obtained from the characters in column 5 are used; the number of partial augmentations for elements of order $q$ which fulfill these constraints is given in the next column. Where applicable, \cref{prop:mu1is0,prop:sumzero2} are used to reduce the number of those tuples which can be the partial augmentations of the $p$-th power of a normalized unit of order $pq$. The last column indicates which Brauer characters modulo $r$ are sufficient to exclude the existence of units of order $pq$ for all possibilities for partial augmentations which remained. For elements of order $2$ in $\V(\ZZ S_{17})$ the Brauer table mod $17$ and the computer algebra package Normaliz \cite{Normaliz,NormalizArticle} was used. The existence of elements of order $2\cdot 17$ was excluded with the Brauer table modulo $3$. \qedhere
\begin{table}[h] \caption{Data for Units in $\V(\ZZ S_m)$}\label{infoSm}
{\footnotesize
\begin{tabular}{cccccccc}\toprule
  $m$ & $q$ & $p$ & $r$ &\begin{minipage}{2.5cm} Brauer characters mod $r$ used for units of order $q$ \end{minipage} & \begin{minipage}{2.5cm} Number of partial augmentations for units of order $q$ \end{minipage} &  \begin{minipage}{2.5cm} Number after applying \cref{prop:mu1is0,prop:sumzero2} \end{minipage} &  \begin{minipage}{2.5cm} Brauer characters mod $r$ excluding existence of units of order $p\cdot q$ \end{minipage} \\  \midrule
  7 & 3 & 5 &  & \multicolumn{4}{l}{\cref{ex:order_3_5}} \\  % it looks like it is possible to rule out (2, -1) for small symmetric groups with the Brauer table mod 2 but this is no longer possible for elements of order 7 in S_14
  8 & 2 & 7 & 3 & 2, ..., 4 & 16 & 16 & 3, 5 \\
  9 & 3 & 7 & 2 & 2, ..., 4 & 9 & 4 & 4 \\
 12 & 2 & 11 & 3 & 2, ..., 6 & 558 & 558 &  2, ..., 9 \\
 13 & 3 & 11 & 2 & 3, ..., 6 & 128 & 18 &  2, 3, 6 \\
 14 & 2 & 13 & 3 & 2, ..., 10 & 10\ 259 & 10\ 259 & 2, ..., 10 \\
 15 & 3 & 13 & 2 & 2, 3, 16, 17, 19 & 1\ 389 & 363 & 2, 3, 8, 16, 17, 19 \\
 15 & 5 & 11 & 2 & 2, 16 & 19 & 10 &  2, 19\\
 17 & 2 & 17 & 17, 3 & 2, ..., 10 & 570\ 252 & 10\ 004 & 2, ..., 13 \\
 17 & 3 & 17 & 2 & 2, ..., 6 & 3\ 138 & 367 & 2, ..., 7 \\
 17 & 5 & 17 & 2 & 2, 3 & 30 & 6 & 2, ..., 4\\
 17 & 5 & 13 & 2 & 2, 3 & 30 & 12 & 2, ..., 4 \\
\bottomrule
\end{tabular} 
}
\end{table}

\begin{table}[h] \caption{Data for Units in $\V(\ZZ A_m)$}\label{infoAm}
{\footnotesize
\begin{tabular}{ccccccc}\toprule
  $m$ & $q$ & $p$ & $r$ &\begin{minipage}{2.5cm} Brauer characters mod $r$ used for units of order $q$ \end{minipage} & \begin{minipage}{2.5cm} Number of partial augmentations for units of order $q$ \end{minipage} &   \begin{minipage}{2.5cm} Brauer characters mod $r$ excluding existence of units of order $p\cdot q$ \end{minipage} \\  \midrule
 14 & 2 & 11 & 3 & $2, 3$ & 58 &  $2,3, 5$ \\
 16 & 2 & 13 & 3 & $2, 3, 5$ & 1105 &  $2, ..., 7$ \\
\bottomrule
\end{tabular} 
}
\end{table}

\end{proof}

\bibliographystyle{amsalpha}
\bibliography{PQ_Sn.bib}

\end{document}